\newcounter{bullet}
\newtheorem{thm}{Theorem}[section]
\newtheorem{lem}[thm]{Lemma}
\newtheorem{conj}[thm]{Conjecture}
\theoremstyle{definition}
\newtheorem{claim}[thm]{Claim}
\newtheorem{obs}[thm]{Observation}
\crefname{lem}{lemma}{lemmas}
\crefname{thm}{theorem}{theorems}
\newcommand{\gl}{\lambda}
\newcommand{\RR}{\mathbb{R}}
\newcommand{\cF}{\mathcal{F} }
\newcommand{\cG}{\mathcal{G} }
\newcommand{\beq}[1]{\begin{equation}\label{#1}}
\newcommand{\enq}[0]{\end{equation}}
\newcommand{\nin}[0]{\noindent}
\newcommand{\sub}[0]{\subseteq}
\newcommand{\0}[0]{\emptyset}
\newcommand{\ra}[0]{\rightarrow}
\newcommand{\pr}[0]{\mathbb{P}}
\newcommand{\E}[0]{\mathbb{E}}
\renewcommand{\eta}{\left(\left(\frac{q}{2}\right)^2\right)}
\begin{document}

\title{Note on a conjecture of Talagrand: expectation thresholds vs. fractional expectation thresholds}

\author[Q. Dubroff]{Quentin Dubroff}
\address{Department of Mathematics, Carnegie Mellon University}
\email{qdubroff@andrew.cmu.edu}

\author[J. Kahn]{Jeff Kahn}
\address{Department of Mathematics, Rutgers University}
\email{jkahn@math.rutgers.edu}

\author[J. Park]{Jinyoung Park}
\address{Department of Mathematics, Courant Institute of Mathematical Sciences, New York University}
\email{jinyoungpark@nyu.edu}

\begin{abstract}
We show that a restricted version 
of a conjecture of M.\ Talagrand on the relation between  
``expectation thresholds" and ``fractional expectation thresholds" follows easily 
from a strong version of a second conjecture of Talagrand, on "selector processes." 
The selector process conjecture was proved by Park and Pham, and the  
quantitative strengthening used here is due to Bednorz, Martynek, and Meller. 
\end{abstract}

\maketitle

\section{Introduction}

The purpose of this note is to point out that a restricted version 
of a conjecture of 
M.\ Talagrand (\Cref{LT} below) is an easy consequence of 
\cite{bednorz2022suprema}, a quantitative strengthening of \cite[Theorem 1.5]{park2024conjecture}. 
We first briefly recall definitions. 
(For a less hurried introduction to the problem, see e.g. \cite{demarco2015note,frankston2022problem}.)

For a finite set $V$, $2^V$ is the power set of $V$. An $\cF \sub 2^V$ is \textit{increasing} if $B \supseteq A \in \cF \Rightarrow B \in \cF.$ For $\cG \sub 2^V$ we use $\langle \cG \rangle$ for the increasing family generated by $\cG$, namely $\{B \sub V: \exists A \in \cG, B \supseteq A\}.$ 
\emph{We assume throughout that $|V|=n$ and $\cF \sub 2^V$ is increasing.}

Say $\cF$ is \textit{$p$-small} if there is a $\cG \sub 2^V$ such that
\[\langle \cG \rangle \supseteq \cF \text{ and } \sum_{S \in \cG} p^{|S|} \le 1/2,\]
and set $q(\cF)=\max\{p:\text{$\cF$ is $p$-small}\}$. Say $\cF$ is \textit{weakly $p$-small} if there is a $\gl:2^V \ra \RR^+ (:= [0, \infty))$ such that
\beq{wps}
\sum_{S \sub I} \gl_S \ge 1 \,\, \forall I \in \cF \,\,
\text{ and } \,\,\sum_S \gl_S p^{|S|} \le 1/2,
\enq
and set $q_f(\cF)=\max\{p:\text{$\cF$ is weakly $p$-small}\}$. If there is 
$\gl$ as in \eqref{wps} supported on sets of size at most $r$, we say $\cF$ is 
\textit{weakly $(p,r)$-small}. 

We are interested in the following conjecture of Talagrand \cite[Conjecture 6.3]{talagrand2010many}.

\begin{conj}\label{LT}
    There is a universal $L>0$ such that for every finite $V$ and increasing $\cF \sub 2^V$,
    \[q(\mathcal F) \ge q_f(\mathcal F)/L.\]
\end{conj}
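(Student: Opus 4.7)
The plan is to reduce the full conjecture to the restricted $(p,r)$-version --- which is what the note obtains from \cite{bednorz2022suprema} --- by decomposing a general fractional witness dyadically in the sizes of its support. Fix $p < q_f(\cF)$ and $\lambda : 2^V \to \RR^+$ as in \eqref{wps}. The goal is to produce $\cG \sub 2^V$ with $\langle \cG \rangle \supseteq \cF$ and $\sum_{S \in \cG} (p/L)^{|S|} \le 1/2$ for a universal $L$, starting from the arbitrary $\lambda$ (not assumed to be supported on bounded-size sets).

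First I would split $\supp \lambda$ into layers $\cA_k = \{S : 2^k \le |S| < 2^{k+1}\}$, $k \ge 0$. Using a convergent weight sequence $c_k = 6/(\pi^2(k+1)^2)$, pigeonhole on $\sum_{S \sub I} \lambda_S \ge 1$ gives, for each $I \in \cF$, a layer $k(I)$ with $\sum_{S \in \cA_{k(I)},\, S \sub I} \lambda_S \ge c_{k(I)}$. Letting $\cF_k$ be the corresponding sublevel family, one has $\cF = \bigcup_k \cF_k$ and $c_k^{-1} \lambda \mathbf{1}_{\cA_k}$ is a fractional witness for $\cF_k$ supported on sets of size $\le r_k := 2^{k+1}$ with total $p$-weight $c_k^{-1} \mu_k$, where $\mu_k = \sum_{S \in \cA_k} \lambda_S p^{|S|}$. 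Applying the restricted statement to each $\cF_k$ produces a discrete $\cG_k$ with $\langle \cG_k \rangle \supseteq \cF_k$; setting $\cG = \bigcup_k \cG_k$ covers $\cF$, and the required bound reduces to controlling $\sum_k \sum_{S \in \cG_k} (p/L)^{|S|}$.

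The main obstacle is making $L$ truly universal. Two sources of loss must be absorbed simultaneously: the pigeonhole factor $c_k^{-1} = O(k^2)$, and any dependence on $r_k = 2^{k+1}$ in the restricted constant. The geometric damping $p^{|S|} \le p^{2^k}$ at layer $k$ absorbs polynomial losses once $k$ is large, but the small-$k$ layers (sets of size $O(1)$) have no such damping and force the restricted constant at that scale to already be an absolute $O(1)$. The hardest step is therefore either verifying that the Bednorz--Martynek--Meller selector bound, as used to prove the restricted conjecture, delivers a constant genuinely independent of $r$, or replacing the layer-by-layer assembly by a single-shot selector argument where $\cG$ is sampled from a distribution tuned globally to $\lambda$ across all scales and shown to cover $\cF$ with positive probability at $(p/L)$-weight $O(1)$. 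This is precisely the gap between the restricted statement the note actually proves and the universal-constant \Cref{LT}, and I expect any proof of the full conjecture to stand or fall on this uniformity question.
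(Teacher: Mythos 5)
This statement is Talagrand's \Cref{LT}, which is an \emph{open conjecture}; the paper does not prove it, and explicitly offers only the restricted \Cref{MTez}, in which the constant is allowed to depend on the parameter $r$ bounding the support sizes. There is therefore no ``paper's own proof'' to compare against, and your write-up --- to its credit --- does not claim one either: you sketch a dyadic-layering reduction to the restricted case and then correctly flag that the reduction fails unless the restricted constant $J(r)$ is bounded independently of $r$. That diagnosis is exactly right, and the paper's own quantitative bookkeeping makes the gap concrete: with $C(r)=(2er)^2$ from \cite{bednorz2022suprema} and $J = 10rC(r)$, one gets $J(r) = \Theta(r^3)$, so for layer $k$ (where $r_k = 2^{k+1}$) the constant $J_k$ is exponentially large in $k$ and cannot be absorbed.

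There is, however, a second obstruction in your reduction that persists \emph{even if} $J$ were uniformly bounded, and it is worth naming since it suggests the dyadic split is not the right scaffolding. The conclusion of \Cref{MTez} (or of \Cref{LT} applied to a single layer) is that $\cF_k$ is $p'$-small, i.e.\ there is $\cG_k$ with $\sum_{S\in\cG_k}(p')^{|S|}\le 1/2$; but there is no control on the sizes of the sets appearing in $\cG_k$, and hence no ``geometric damping'' $p^{|S|}\le p^{2^k}$ for members of $\cG_k$ --- that damping applies to $\supp(\lambda\mathbf{1}_{\cA_k})$, not to the discrete cover you get out. Consequently $\sum_k\sum_{S\in\cG_k}(p/L)^{|S|}$ is bounded only by $\sum_k 1/2$, which diverges; one would instead need a variant of \Cref{MTez} producing a cover with $p$-weight $\lesssim \epsilon_k$ at layer $k$ for some summable $(\epsilon_k)$, tied to the layer's share $\mu_k=\sum_{S\in\cA_k}\lambda_S p^{|S|}$ of the fractional budget. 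So the plan has two independent uniformity problems: the one you identify (in $r$), and one in the failure-probability/$p$-weight budget across scales. The authors' own remark that a single-shot argument ``tuned globally to $\lambda$ across all scales'' might be needed is, I think, the right instinct --- but as it stands, \Cref{LT} remains open, and this proposal is a program, not a proof.
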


\nin 
That is, weakly $p$-small implies $(p/L)$-small.

Our goal here is the following statement, which says that for any $r$,
weakly $(p,r)$-small does imply $(p/L)$-small, but with $L$ depending on $r$.
The case $r=2$, which was proved in \cite{frankston2022problem}, was one of a pair of test cases for \Cref{LT} suggested in \cite{talagrand2010many}. (The other was resolved in \cite{demarco2015note}.) 
The simplicity of the present argument versus the rather difficult 
\cite{frankston2022problem} is a nice illustration of the power of the selector
process results we are using.

\begin{thm}\label{MTez}
    For any $r$, there is a $J$ such that any
    weakly $(Jp,r)$-small $\cF$ is $p$-small.
\end{thm}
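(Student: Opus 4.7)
The plan is to invoke directly the quantitative strengthening of the selector--process theorem due to Bednorz--Martynek--Meller. In spirit, that result turns any fractional witness $\lambda$ supported on sets of size at most $r$ into an honest cover $\cG$ of $\cF$ whose total $p$-weighted size is controlled by a constant depending only on $r$. The key observation is that the $\log \ell$ loss appearing in the FKNP/Park--Pham arguments depends only on the maximum support size of $\lambda$; since here $\ell \le r$ is bounded, this loss is finite, and a one-line rescaling absorbs it.

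In detail: suppose $\cF$ is weakly $(Jp, r)$-small with witness $\lambda : 2^V \to \RR^+$ satisfying $\lambda_S = 0$ for $|S| > r$, $\sum_{S \sub I} \lambda_S \ge 1$ for each $I \in \cF$, and $\sum_S \lambda_S (Jp)^{|S|} \le 1/2$. Applying the BMM estimate at parameter $Jp$ should produce an actual cover $\cG \sub 2^V$ of $\cF$ (so $\langle \cG \rangle \supseteq \cF$) with
\[ \sum_{S \in \cG} (Jp)^{|S|} \le K \]
for some $K = K(r)$ depending only on $r$. We may assume $\emptyset \notin \cG$ (and $\emptyset \notin \cF$, else the hypothesis forces $\lambda_\emptyset \ge 1$ and the second inequality fails), so every $S \in \cG$ has $|S| \ge 1$ and
\[ \sum_{S \in \cG} p^{|S|} = \sum_{S \in \cG} J^{-|S|}(Jp)^{|S|} \le J^{-1} K. \]
Choosing $J = 2K(r)$ yields $\sum_{S \in \cG} p^{|S|} \le 1/2$, i.e., $\cF$ is $p$-small.

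The main (and essentially only) obstacle is extracting from \cite{bednorz2022suprema} the covering form invoked above. Depending on whether BMM state their bound in selector-process language or directly as a hypergraph-covering statement, a brief translation may be needed; in the worst case one first dyadically partitions $\lambda$ by the magnitude of $\lambda_S$ (or by $|S|$), applies the theorem on each layer separately, and absorbs the resulting logarithmic overhead into $K(r)$ (still a bounded function of $r$).
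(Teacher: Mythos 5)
There is a genuine gap, and it sits exactly where you flag it yourself: the claim that the Bednorz--Martynek--Meller result ``produce[s] an actual cover $\cG$ of $\cF$'' with $\sum_{S\in\cG}(Jp)^{|S|}\le K(r)$ is not something that lemma gives you, and in fact it is essentially a restatement of the conclusion you are trying to prove. The BMM ingredient (Lemma~\ref{BMM} in the paper) is not a cover-producing statement; it is a probabilistic estimate, taking as \emph{input} a system of weights $(\mu_I)_{I\in\cF}$ with $\mu_I(I)=1$ and saying that if $\cF$ is \emph{not} $p$-small then a uniformly random $m$-set $W$ is unlikely to be $(1-\tfrac{1}{2r})$-bad for those weights. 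It says nothing until you choose the $\mu_I$'s, and choosing them well is the substance of the argument.

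The paper bridges this gap concretely: from the fractional witness $\lambda$ (with $\lambda_\emptyset=0$ WLOG) it defines, for each $I\in\cF$, $\nu_I=\sum_{S\sub I}|S|\lambda_S\ge 1$ and $\mu_I(v)=\nu_I^{-1}\sum_{v\in S\sub I}\lambda_S$, checks $\mu_I(I)=1$, and then runs the contrapositive: a short computation shows that for any $Y$, $\mu_I(Y\cap I)\le 1-\tfrac1r+\sum_{S\sub Y\cap I}\lambda_S$, and Markov applied to $\sum_{S\sub X}\lambda_S$ (using the bound $\sum_S\lambda_S(Jp)^{|S|}\le 1/2$ and the bounded support size $r$) shows a random $m$-set $X$ with $m=Jpn/(2r)$ is $(1-\tfrac{1}{2r})$-bad with probability at least $1/2$. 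If $\cF$ were not $p$-small, Lemma~\ref{BMM} with this $m$ would force that probability below $1/2$ --- contradiction. So $\cF$ is $p$-small, with $J=10rC(r)$. Your rescaling observation at the end (if you had a cover with $(Jp)$-weight $\le K$ and $\emptyset\notin\cG$, then $p$-weight $\le K/J$) is fine arithmetic, but it only kicks in after the cover exists, and producing that cover is exactly what requires the $\mu_I$ construction and the probabilistic argument. The closing suggestion about dyadically partitioning $\lambda$ and absorbing a logarithmic overhead does not match the shape of the lemma either, since there is no cover-version of BMM to feed those layers into.
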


This turns out to be an easy consequence of \Cref{BMM} below, which is essentially \cite[Lemma 3.3]{bednorz2022suprema}. For the statement of the lemma, we need a little more preparation. Let $\cF \sub 2^{V}$ be given, and for each $I \in \cF$, suppose we are given weights $(\mu_I(v))_{v \in I}$ with $\mu_I(v) \ge 0$ and
\[\mu_I(I):=\sum_{v \in I} \mu_I(v)=1.\]
Say $X \sub V$ is \textit{$c$-bad} ($c >0$) if
\[\sup_{I \in \mathcal F} \mu_I(X \cap I)<c.\]

\begin{lem}\label{BMM}
For each positive integer $r $ there is a $C$ such that
if $\cF $ is not $p$-small
and $W$ is chosen uniformly from the $m$-element subsets of $V$, then
\[\pr\left(\text{$W$ is $\left(1 - \frac{1}{2r}\right)$-bad}\right) < 2 \sum_{t=1}^n \left(C \frac{np}{m}\right)^t.\]
\end{lem}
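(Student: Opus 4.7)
My plan is to prove \Cref{BMM} by contrapositive, following the iterative covering method of Park--Pham as refined quantitatively in \cite{bednorz2022suprema}. Fix $C = C(r)$ large (to be chosen). Aiming for a contradiction, assume $\cF$ is not $p$-small yet $\pr(W \text{ is } (1-1/(2r))\text{-bad}) \ge 2\sum_{t=1}^n (Cnp/m)^t$. The goal is to construct, with positive probability over $W$, a family $\cG \sub 2^V$ with $\langle\cG\rangle \supseteq \cF$ and $\sum_{S\in\cG} p^{|S|} \le 1/2$, contradicting that $\cF$ is not $p$-small.

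The first step is a stagewise view of $W$. Decompose $W = W_1 \sqcup \cdots \sqcup W_r$ as a uniformly random partition of $W$ into $r$ blocks of size $\approx m/r$, so that each $W_j$ is marginally uniform on its size subsets of $V$. If $W$ is $(1-1/(2r))$-bad then for every $I \in \cF$, $\sum_{j=1}^r \mu_I(W_j \cap I) = \mu_I(W \cap I) < 1 - 1/(2r)$, so by pigeonhole each $I$ admits a stage $j(I) \in \{1,\ldots,r\}$ with $\mu_I(W_{j(I)} \cap I) < 1/r - 1/(2r^2)$, a definite $\mu_I$-deficit of at least $1/(2r^2)$ in that round. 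The natural candidate cover is $\cG(W) := \{I \setminus W : I \in \cF\}$, which trivially satisfies $\langle \cG(W) \rangle \supseteq \cF$; the real work is to bound $\sum_{S \in \cG(W)} p^{|S|}$.

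The heart of the proof is a recursive estimate of $\E_W\bigl[\sum_{S \in \cG(W)} p^{|S|}\bigr]$, grouped by the stage $j(I)$ at which each $I$ exhibits its deficit and by the residual size $t := |I \setminus W|$. Conditioning on the first $j-1$ rounds, the not-$p$-smallness of $\cF$ should pass to the residual family $\{I \setminus (W_1 \cup \cdots \cup W_{j-1}) : j(I) = j\}$, and combined with the random $W_j$ of size $\approx m/r$ should yield a geometric-in-$t$ estimate of the form: the expected contribution of such an $I$ to the cover cost is $O((Cnp/m)^t)$. Summing over $j$ and $t$ would then give $\E_W\bigl[\sum_{S \in \cG(W)} p^{|S|}\bigr] < \sum_{t=1}^n (Cnp/m)^t$, so the assumed lower bound on $\pr(\text{bad})$ would force some realization of $W$ with $\sum p^{|S|} \le 1/2$, producing the desired contradiction.

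The main obstacle is precisely this weighted stagewise recursion: propagating not-$p$-smallness from $\cF$ to the residuals across rounds while maintaining the $\mu_I$-weighted structure, and extracting the per-stage geometric factor $np/m$. The size hypergeometric $\mu_I$ is supported on may be very different from $|I|$, so the combinatorial spread arguments of \cite{park2024conjecture} must be lifted from cardinalities to $\mu_I$-mass -- this weighted lift, together with the precise $(Cnp/m)^t$ bookkeeping, is the content of \cite[Lemma~3.3]{bednorz2022suprema}.
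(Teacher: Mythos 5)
The paper does not actually prove Lemma~\ref{BMM}: it is cited wholesale as \cite[Lemma~3.3]{bednorz2022suprema} for $r=1$, and the general case is attributed to a ``direct extension of the argument'' of that paper, with $C=(2er)^2$. So there is no in-paper argument to compare against, and your decision to sketch the BMM/Park--Pham covering strategy and then defer the technical core to \cite{bednorz2022suprema} is consistent with the paper's own stance.

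That said, the middle of your sketch has a genuine gap. The candidate cover $\cG(W)=\{I\setminus W : I\in\cF\}$ does trivially satisfy $\langle\cG(W)\rangle\supseteq\cF$, but your plan to control its cost by showing that ``the expected contribution of such an $I$ to the cover cost is $O((Cnp/m)^t)$'' and summing over $I$, $j$, $t$ does not close: summing a per-$I$ bound over all $I\in\cF$ carries an uncontrolled $|\cF|$ factor, and nothing in your sketch collapses it. The actual arguments of Frankston--Kahn--Narayanan--Park, Park--Pham, and Bednorz--Martynek--Meller avoid this precisely by \emph{not} taking the residuals $I\setminus W$ wholesale. Instead, for each $I$ one selects a much more economical fragment (e.g.\ a minimal ``pathological'' subset with respect to the current round), and the cost is bounded by an encoding/counting lemma showing that many $I$'s are forced to share the same fragment, so the sum over distinct cover elements is tame. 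That fragment-selection-plus-encoding step is the heart of the proof, not just ``$(Cnp/m)^t$ bookkeeping''; your sketch treats it as routine, and that is where the argument would break down if you tried to fill in the details. The $r$-block decomposition of $W$ and the pigeonhole for a $\mu_I$-deficit round is a reasonable guess at how the $r$-dependence enters, but absent the fragment/encoding machinery it does not by itself yield a valid cover with small weight.
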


\nin
When $r=1$ this is \cite[Lemma~3.3]{bednorz2022suprema}, and direct extension 
of the argument of \cite{bednorz2022suprema} gives Lemma~\ref{BMM} with 
$C = (2er)^2$.

\section{Proof of \Cref{MTez}}

We will prove Theorem~\ref{MTez} with $J= 10rC$, where $C=C(r)$ is as in Lemma~\ref{BMM}.\footnote{A more involved argument, using ideas from the proof of \cite[Theorem 8]{fischer2023some}, would allow us to take  $J=O(C)$, but for simplicity we settle for the present version.} 
For the rest of our discussion, $X$ is a uniformly random $m$-element subset of $V$, with $m = Jpn/(2r) = 5Cpn$.  

Suppose $\gl : \binom{V}{\leq r} \rightarrow \RR^+$ 
satisfies \eqref{wps} with $Jp$ in place of $p$.
We may assume $\gl_\0=0$, since otherwise $\gl'$ given by $\gl'_\0=0$ and 
$\gl'_S=\gl_S/(1-\gl_\0)$ if $S\neq \0$ also satisfies \eqref{wps} (with 1/2 improved to 
$(1-2\gl_\0)/(2(1-\gl_\0))$).
We use \Cref{BMM} in combination with the following easy point.

\begin{obs}\label{o1}
With probability at least $1/2$, $\sum_{S \sub X} \gl_S\le 1/(2r).$
\end{obs}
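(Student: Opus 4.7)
The plan is to prove \Cref{o1} by a one-line first-moment computation followed by Markov's inequality. By linearity of expectation,
\[
\E\left[\sum_{S \sub X} \gl_S\right] = \sum_S \gl_S \pr(S \sub X),
\]
and since $X$ is a uniformly random $m$-subset of an $n$-set, for each $S$ of size $s$ we have the standard hypergeometric bound
\[
\pr(S \sub X) = \frac{\binom{n-s}{m-s}}{\binom{n}{m}} = \prod_{i=0}^{s-1} \frac{m-i}{n-i} \le \left(\frac{m}{n}\right)^{s}.
\]

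Now I would plug in the specific choice $m/n = Jp/(2r)$. Using the assumption that $\gl_\0 = 0$ (so $|S| \ge 1$ whenever $\gl_S > 0$), and the fact that $1/(2r) \le 1$, one has $(1/(2r))^{|S|} \le 1/(2r)$ on the support of $\gl$. Therefore
\[
\sum_S \gl_S \left(\frac{Jp}{2r}\right)^{|S|} = \sum_S \gl_S (Jp)^{|S|} \left(\frac{1}{2r}\right)^{|S|} \le \frac{1}{2r} \sum_S \gl_S (Jp)^{|S|} \le \frac{1}{2r} \cdot \frac{1}{2} = \frac{1}{4r},
\]
where the last inequality is the weak $(Jp,r)$-smallness hypothesis \eqref{wps} (with $Jp$ in place of $p$). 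Combining, $\E[\sum_{S \sub X} \gl_S] \le 1/(4r)$.

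Finally, Markov's inequality applied to the nonnegative random variable $\sum_{S \sub X} \gl_S$ gives
\[
\pr\!\left(\sum_{S \sub X} \gl_S > \frac{1}{2r}\right) \le \frac{\E[\sum_{S \sub X} \gl_S]}{1/(2r)} \le \frac{1/(4r)}{1/(2r)} = \frac{1}{2},
\]
which yields the observation. There is no real obstacle here; the only subtlety worth flagging is that the reduction to $\gl_\0 = 0$ carried out just before the observation is exactly what allows the factor $(1/(2r))^{|S|}$ to be bounded by $1/(2r)$ rather than by $1$, which is essential for extracting the needed $1/(2r)$ saving from the $1/2$ available in \eqref{wps}.
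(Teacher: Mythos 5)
Your proof is correct and matches the paper's argument exactly: both bound $\E[\sum_{S\sub X}\gl_S]$ by $\sum_S(m/n)^{|S|}\gl_S$, substitute $m/n = Jp/(2r)$, use $\gl_\0=0$ to pull out a factor of $(2r)^{-1}$, invoke the weak $(Jp,r)$-smallness hypothesis, and finish with Markov. You have merely made explicit the hypergeometric estimate $\pr(S\sub X)\le(m/n)^{|S|}$ and the role of the $\gl_\0=0$ normalization, both of which the paper leaves implicit.
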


\begin{proof}
This follows from Markov's Inequality, once we notice that
our assumptions on $X$ and $\gl$ imply
    \[\E\left[\sum_{S\subseteq X} \gl_S\right] \leq  
    \sum_{S } (m/n)^{|S|} \gl_S 
    = \sum_{S } (Jp/(2r))^{|S|} \gl_S \leq (2r)^{-1}\sum_{S } (Jp)^{|S|} \gl_S \leq 1/(4r). \qedhere\]
\end{proof}

For $I \in \cF$, let $\nu_I = \sum_{S \sub I} |S|\gl_S \geq 1$
(see \eqref{wps}, recalling that $\gl_\0=0$).
Define $\mu_{I}:I \ra \RR^+$ by
\[\mu_{I}(v)=\nu_I^{-1}\sum_{v \in S \sub I} \gl_S  \,\,\,\, \forall v\in I\]
and notice that $\mu_I(I) = 1$. Suppose for a contradiction that $\cF$ is not $p$-small. Then by \Cref{BMM} and our choice of $m$, the probability that $X$ is $(1-1/(2r))$-bad (with respect to the $\mu_I$'s) is less than $1/2$. So the following claim gives the desired contradiction.

\begin{claim} With probability at least $1/2$,
$X$ is $(1 - 1/(2r))$-bad with respect to the $\mu_I$'s.
\end{claim}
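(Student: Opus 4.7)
The plan is to show that the event produced by Observation~\ref{o1} — which occurs with probability at least $1/2$ — deterministically forces $X$ to be $(1-1/(2r))$-bad. So first I would invoke Observation~\ref{o1} to condition on the event
\[
\sum_{S \sub X} \gl_S < \tfrac{1}{2r},
\]
(the strict inequality comes from Markov applied to the bound $\E[\sum_{S\subseteq X}\gl_S]\le 1/(4r)$ in Observation~\ref{o1}).

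Next, fix an arbitrary $I \in \cF$ and rewrite
\[
\nu_I\,\mu_I(X\cap I) \;=\; \sum_{v \in X \cap I}\sum_{v \in S \sub I} \gl_S \;=\; \sum_{S \sub I} |S \cap X|\,\gl_S,
\]
so what I need is the upper bound $\sum_{S\sub I}|S\cap X|\gl_S < (1-1/(2r))\nu_I$. The key pointwise inequality I would use is that, since $|S|\le r$,
\[
|S \cap X| \;\le\; (1-\tfrac{1}{r})|S| \;+\; \tfrac{1}{r}\,|S|\,\mathbf{1}[S \sub X];
\]
indeed, if $S\sub X$ both sides equal $|S|$, and if $S\not\sub X$ then $|S\cap X|\le |S|-1\le (1-1/r)|S|$. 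Summing against $\gl_S$ (and using $\nu_I = \sum_{S\sub I}|S|\gl_S$) yields
\[
\sum_{S \sub I} |S\cap X|\gl_S \;\le\; \Bigl(1-\tfrac{1}{r}\Bigr)\nu_I \;+\; \tfrac{1}{r}\sum_{S \sub I \cap X}|S|\gl_S.
\]

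Finally, I would use $|S|\le r$ one more time together with the conditioning event to bound the last term:
\[
\sum_{S \sub I \cap X}|S|\gl_S \;\le\; r\sum_{S \sub X}\gl_S \;<\; r\cdot\tfrac{1}{2r} \;=\; \tfrac{1}{2} \;\le\; \tfrac{\nu_I}{2},
\]
where the last step uses $\nu_I \ge 1$ (which follows from $\gl_\emptyset=0$ and $\sum_{S\sub I}\gl_S\ge 1$). Combining, $\nu_I\mu_I(X\cap I) < (1-1/(2r))\nu_I$, and dividing by $\nu_I>0$ gives $\mu_I(X\cap I) < 1 - 1/(2r)$ for every $I \in \cF$, which is exactly the definition of $X$ being $(1-1/(2r))$-bad.

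I do not expect a real obstacle here: once one has Observation~\ref{o1}, the whole argument is a deterministic manipulation, and the only nontrivial step is spotting the pointwise bound on $|S\cap X|$ above, which is essentially forced by the role of $r$ in the hypothesis and by the need for \emph{strict} inequality $\mu_I(X\cap I)<1-1/(2r)$.
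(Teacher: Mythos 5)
Your proof is correct and takes essentially the same route as the paper's. You package the paper's case split into $S\subseteq X$ and $S\not\subseteq X$ as a single pointwise inequality $|S\cap X|\le(1-\tfrac1r)|S|+\tfrac1r|S|\mathbf{1}[S\subseteq X]$, and your extra application of $|S|\le r$ in the last step simply cancels the $1/r$ prefactor, so the two computations coincide term by term. One small bonus of your write-up: you note that Markov in fact gives the \emph{strict} inequality $\sum_{S\subseteq X}\gl_S<1/(2r)$, which is what the strict inequality in the definition of $c$-bad requires; the paper states Observation~\ref{o1} and the last line of its own Claim proof with $\le$, a harmless but genuine slip that your version avoids.
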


\begin{proof}    For any $Y \sub V$ and $I \in \cF$,
\begin{align*}
\mu_{I}(Y \cap I) = \nu_I^{-1}\sum_{v \in Y \cap I}\sum_{v \in S \sub I}\gl_S 
&\leq \nu_I^{-1}\sum_{\substack{S \sub I \\ S\not\subseteq Y}} (|S| -1)\gl_S + \nu_I^{-1}\sum_{S \subseteq Y \cap I} |S|\gl_S \\
&= \nu_I^{-1}\sum_{ S \sub I}(|S| - 1)\gl_S + \nu_I^{-1}\sum_{S \subseteq Y\cap I} \gl_S\\
&\leq \nu_I^{-1}(1 - 1/r)\sum_{S \sub I}|S|\gl_S + \nu_I^{-1}\sum_{S \subseteq Y\cap I} \gl_S\\
&= 1-1/r+ \nu_I^{-1}\sum_{S \subseteq Y\cap I} \gl_S\le 1-1/r+
\sum_{S \subseteq Y\cap I} \gl_S.
\end{align*}
When $Y=X$ we may bound the last sum (for any $I$) by
$\sum_{S\sub X}\gl_S$, which
by \Cref{o1} is
at most $(2r)^{-1}$ with probability at least $1/2$; and it follows that
with probability at least $1/2$,
\[\sup_{I \in \cF} \mu_{I}(X \cap I) \leq 1 - 1/r + (2r)^{-1} = 1 - 1/(2r). \qedhere\]
\end{proof}

\section*{Acknowledgments}
JK was supported by NSF Grant DMS-1954035.  
JP was supported by NSF Grant DMS-2324978 and a Sloan Fellowship.

\bibliographystyle{plain}
\bibliography{bibliography}

\end{document}